\definecolor{refs}{rgb}{0.7,0,0}
\definecolor{ext}{RGB}{112,112,112}
\definecolor{cite}{RGB}{034,113,179}
\renewcommand{\i}{\mathrm{i}}
\renewcommand{\d}{\mathrm{d}}
\newcommand{\R}{\mathbb{R}}
\newcommand{\Z}{\mathbb{Z}}
\newcommand{\C}{\mathbb{C}}
\newcommand{\tr}{\operatorname{tr}}
\newcommand{\ov}{\overline}
\newcommand{\e}{\mathrm{e}}
\newcommand{\eps}{\varepsilon}
\newcommand{\levcon}{\varphi}
\newcommand{\rot}{z\rtimes r\e^{\i\phi}}
\newtheorem{thm}{Theorem}
\newtheorem{cor}{Corollary}
\newtheorem{prp}{Proposition}
\newtheorem{lemma}{Lemma}
\theoremstyle{remark}
\newtheorem{rmk}{Remark}
\newtheorem{ex}{Example}
\subjclass[2010]{Primary 53A20; Secondary 53C24, 53C28}
\keywords{projective structures, conformal connections, geodesic rigidity, twistor space}
\numberwithin{equation}{section}
\date{May 9, 2015.}
\thanks{Research for this article was carried out while the author was visiting the Mathematical Institute at the University of Oxford as a postdoctoral fellow of the Swiss NSF, PA00P2\_142053. The author would like to thank the Mathematical Institute for its hospitality.}
\title[Conformal connections on surfaces]{Geodesic rigidity of conformal connections on surfaces}
\author[T.~Mettler]{Thomas Mettler}
\begin{document}

\begin{abstract}
We show that a conformal connection on a closed oriented surface $\Sigma$ of negative Euler characteristic preserves precisely one conformal structure and is furthermore uniquely determined by its unparametrised geodesics. As a corollary it follows that the unparametrised geodesics of a Riemannian metric on $\Sigma$ determine the metric up to constant rescaling. It is also shown that every conformal connection on the $2$-sphere lies in a complex $5$-manifold of conformal connections, all of which share the same unparametrised geodesics.   
\end{abstract}
\maketitle

\section{Introduction}
A~\textit{projective structure} $\mathfrak{p}$ on a surface $\Sigma$ is an equivalence class of affine torsion-free connections on $\Sigma$ where two connections are declared to be projectively equivalent if they share the same geodesics up to parametrisation. A surface equipped with a projective structure will be called a~\textit{projective surface}. In ~\cite{MR3144212} it was shown that an oriented projective surface $(\Sigma,\mathfrak{p})$ defines a complex surface $Z$ together with a projection to $\Sigma$ whose fibres are holomorphically embedded disks. Moreover, a conformal connection in the projective equivalence class corresponds to a section whose image is a holomorphic curve in $Z$. Locally such sections always exist and hence every affine torsion-free connection on a surface is locally projectively equivalent to a conformal connection. The problem of characterising the affine torsion-free connections on surfaces that are locally projectively equivalent to a Levi-Civita connection was recently solved in~\cite{MR2581355}.   

Here we show that if a closed holomorphic curve $D\subset Z$ is the image of a section of $Z \to \Sigma$, then its normal bundle $N \to D$ has degree twice the Euler characteristic of $\Sigma$. This is achieved by observing that the projective structure on $\Sigma$ canonically equips the co-normal bundle of $D$ with a Hermitian bundle metric whose Chern connection can be computed explicitly. Using the fact that the normal bundle $N \to D$ has degree $2\chi(\Sigma)$ and that the bundle $Z \to \Sigma$ has a contractible fibre, we prove that on a closed surface $\Sigma$ with $\chi(\Sigma)<0$ there is at most one section of $Z \to \Sigma$ whose image is a holomorphic curve. It follows that a conformal connection on $\Sigma$ preserves precisely one conformal structure and is furthermore uniquely determined by its unparametrised geodesics. In particular, as a corollary one obtains that the unparametrised geodesics of a Riemannian metric on $\Sigma$ determine the metric up to constant rescaling, a result previously proved in~\cite{MR1796527}. 

In the case where $\Sigma$ is the $2$-sphere, it follows that the normal bundle of a holomorphic curve $D\simeq \mathbb{CP}^1\subset Z$, arising as the image of a section of $Z \to S^2$, is isomorphic to $\mathcal{O}(4)$. Consequently, Kodaira's deformation theorem can be applied to show that every conformal connection on $S^2$ lies in a complex $5$-manifold of conformal connections, all of which share the same unparametrised geodesics. 

\subsection*{Acknowledgements}
This paper would not have come into existence without several very helpful discussions with Nigel Hitchin. I would like to warmly thank him here. I also wish to thank Vladimir Matveev for references and the anonymous referee for her/his careful reading and useful suggestions.  

\section{Projective structures and conformal connections}

In this section we assemble the essential facts about projective structures on surfaces and conformal connections that will be used during the proof of the main result. Here and throughout the article -- unless stated otherwise -- all manifolds are assumed to be connected and smoothness,
i.e. infinite differentiability, is assumed. Also, we let $\R^n$ denote the space of column vectors of height $n$ with real entries and $\R_n$ the space of row vectors of length $n$ with real entries so that matrix multiplication $\R_n \times \R^n \to \R$ is a non-degenerate pairing identifying $\R_n$ with the dual vector space of $\R^n$. Finally, we adhere to the convention of summing over repeated indices.  

\subsection{Projective structures}

Recall that the space $\mathfrak{A}(\Sigma)$ of affine tor\-sion-free connections on a  surface $\Sigma$ is an affine space modelled on the space of sections of the real vector bundle $V=S^2(T^*\Sigma)\otimes T\Sigma$.\footnote{As usual, by an affine torsion-free connection on $\Sigma$ we mean a torsion-free connection on $T\Sigma$.} We have a canonical trace mapping $\mathrm{tr} : \Gamma(V) \to \Omega^1(\Sigma)$ as well as an inclusion
$$
\iota : \Omega^1(\Sigma) \to \Gamma(V), \quad \alpha \mapsto \alpha \otimes \mathrm{Id}+\mathrm{Id}\otimes \alpha, 
$$
where we define 
$$
\left(\alpha\otimes \mathrm{Id}\right)(v)w=\alpha(v)w \quad \text{and} \quad\left(\mathrm{Id}\otimes \alpha\right)(v)w=\alpha(w)v,  
$$
for all $v,w \in T\Sigma$. Consequently, the bundle $V$ decomposes as $V=V_0\oplus T^*\Sigma$ where $V_0$ denotes the trace-free part of $V$. The projection $\Gamma(V) \to \Gamma(V_0)$ is given by 
$$
\phi \mapsto \phi_0=\phi-\frac{1}{3}\iota\left(\tr \phi\right).
$$
Weyl~\cite{zbMATH02603060} observed that two affine torsion-free connections $\nabla$ and $\nabla^{\prime}$ on $\Sigma$ are projectively equivalent if and only if their difference is pure trace 
\begin{equation}\label{weylchar}
(\nabla-\nabla^{\prime})_0=0.
\end{equation}
We will denote the space of projective structures on $\Sigma$ by $\mathfrak{P}(\Sigma)$. From~\eqref{weylchar} we see that $\mathfrak{P}(\Sigma)$ is an affine space modelled on the  space of smooth sections of $V_0\simeq S^3(T^*\Sigma)\otimes \Lambda^2(T\Sigma)$.  

Cartan~\cite{MR1504846} (see~\cite{MR0159284} for a modern exposition) associates to an oriented projective surface $(\Sigma,\mathfrak{p})$ a~\textit{Cartan geometry} of type $(\mathrm{SL}(3,\R),G)$, which consists of a principal right $G$-bundle $\pi : B \to \Sigma$ together with a~\textit{Cartan connection} $\theta \in \Omega^1(B,\mathfrak{sl}(3,\R))$. The group $G\simeq \R^2\rtimes \mathrm{GL}^+(2,\R)\subset \mathrm{SL}(3,\R)$ consists of matrices of the form
$$
b\rtimes a=\left(\begin{array}{cc} (\det a)^{-1} & b\\ 0 & a\end{array}\right), 
$$
where $a \in \mathrm{GL}^+(2,\R)$ and $b^t \in \R^2$. The Cartan connection $\theta$ is an $\mathfrak{sl}(3,\R)$-valued $1$-form on $B$ which is equivariant with respect to the $G$-right action, maps every fundamental vector field $X_v$ on $B$ to its generator $v \in \mathfrak{g}$, and restricts to be an isomorphism on each tangent space of $B$. Furthermore, the Cartan geometry $(\pi : B \to \Sigma, \theta)$ has the following properties:
\begin{itemize}
\item[(i)] Write $\theta=(\theta^{\mu}_{\nu})_{\mu,\nu=0..2}$. Let $X$ be a vector field on $B$ satisfying $\theta^i_0(X)=c^i$, $\theta^i_j(X)=0$ and $\theta^0_j(X)=0$ for real constants $(c^1,c^2)\neq (0,0)$, where $i,j=1,2$. Then every integral curve of $X$ projects to $\Sigma$ to yield a geodesic of $\mathfrak{p}$ and conversely every geodesic of $\mathfrak{p}$ arises in this way;  
\item[(ii)] an orientation compatible volume form on $\Sigma$ pulls-back to $B$ to become a positive multiple of $\theta^1_0\wedge\theta^2_0$;
\item[(iii)] there exist real-valued functions $W_1,W_2$ on $B$ such that
\begin{equation}\label{struceqprojsurf}
\d \theta+\theta\wedge\theta=\left(\begin{array}{ccc} 0 & W_1 \theta^1_0\wedge\theta^2_0 & W_2 \theta^1_0\wedge\theta^2_0\\ 0 & 0 & 0\\ 0 & 0 &0 \end{array}\right).
\end{equation}
\end{itemize}
The fibre of $B$ at a point $p \in \Sigma$ consists of the $2$-jets of orientation preserving  local diffeomorphisms $\varphi$ with source $0\in \R^2$ and target $p$, so that $\varphi^{-1}$ maps the geodesics of $\mathfrak{p}$ passing through $p$ to curves in $\R^2$ having vanishing curvature at $0$. The structure group $G$ consists of the $2$-jets of orientation preserving fractional-linear transformations with source and target $0 \in \R^2$.  Explicitly, the identification between the matrix Lie group $G$ and the Lie group of such $2$-jets is given by $b\rtimes a \mapsto j^2_0f_{a,b}$ where 
\begin{equation}\label{frac2jets}
f_{a,b} : x \mapsto \frac{(\det a) a\cdot x}{1+(\det a)b\cdot x}
\end{equation}
and $\cdot$ denotes usual matrix multiplication. The group $G$ acts on $B$ from the right by pre-composition, that is, 
\begin{equation}\label{rightaction}
j^2_0\varphi\cdot j^2_0f_{a,b}=j^2_0(\varphi \circ f_{a,b}).
\end{equation}

\begin{rmk}
Cartan's construction is unique in the following sense: If $(B^{\prime}\to \Sigma,\theta^{\prime})$ is another Cartan geometry of type $(\mathrm{SL}(3,\R),G)$ satisfying the properties (i),(ii),(iii), then there exists a $G$-bundle isomorphism $\psi : B \to B^{\prime}$ so that $\psi^*\theta^{\prime}=\theta$. 
\end{rmk}

\begin{ex}\label{flatex}
The Cartan geometry $(\pi : \mathrm{SL}(3,\R)\to \mathbb{S}^2,\theta)$, where $\theta$ denotes the Maurer-Cartan form of $\mathrm{SL}(3,\R)$ and 
$$
\pi : \mathrm{SL}(3,\R) \to \mathrm{SL}(3,\R)/G\simeq \mathbb{S}^2=\left(\R^3\setminus\left\{0\right\}\right)/\R^+
$$
the quotient projection, defines an orientation and projective structure $\mathfrak{p}_0$ on the projective $2$-sphere $\mathbb{S}^2$. The geodesics of $\mathfrak{p}_0$ are the~\textit{great circles} $\mathbb{S}^1\subset \mathbb{S}^2$, that is, subspaces of the form $E\cap \mathbb{S}^2$, where $E\subset\R^3$ is a linear $2$-plane. The group $\mathrm{SL}(3,\R)$ acts on $\mathbb{S}^2$ from the left via the natural left action on $\R^3$ by matrix multiplication and this action preserves both the orientation and projective structure $\mathfrak{p}_0$ on $\mathbb{S}^2$. The unparametrised geodesics of the Riemannian metric $g$ on $\mathbb{S}^2$ obtained from the natural identification $\mathbb{S}^2\simeq S^2$, where $S^2\subset \R^3$ denotes the unit sphere in Euclidean $3$-space, are the great circles. In particular, for every $\psi \in \mathrm{SL}(3,\R)$, the geodesics of the Riemannian metric $\psi^*g$ on $\mathbb{S}^2$ are the great circles as well, hence the space of Riemannian metrics on the $2$-sphere having the great circles as their geodesics contains -- and is in fact equal to -- the real $5$-dimensional homogeneous space $\mathrm{SL}(3,\R)/\mathrm{SO}(3)$.       
\end{ex}

\begin{ex}\label{prefconn}
Here we show how to construct Cartan's bundle from a given affine torsion-free connection $\nabla$ on an oriented surface $\Sigma$. The reader may want to consult~\cite{MR0159284} for additional details of this construction. Let $\upsilon : F^+ \to \Sigma$ denote the bundle of positively oriented~\textit{coframes} of $\Sigma$, that is, the fibre of $F^+$ at $p \in \Sigma$ consists of the linear isomorphisms $u : T_p\Sigma \to \R^2$ which are orientation preserving with respect to the given orientation on $\Sigma$ and the standard orientation on $\R^2$. The group $\mathrm{GL}^+(2,\R)$ acts transitively from the right on each $\upsilon$-fibre by the rule $u\cdot a=R_a(u)=a^{-1} \circ u$ for all $a \in \mathrm{GL}^+(2,\R)$.  This right action makes $F^+$ into a principal right $\mathrm{GL}^+(2,\R)$-bundle over $\Sigma$. Recall that there is a tautological $\R^2$-valued $1$-form $\eta=(\eta^i)$ on $F^+$ defined by
$$
\eta(v)=u(\upsilon^{\prime}(v)), \quad \text{for}\;v \in T_uF^+. 
$$
The form $\eta$ satisfies the equivariance property $(R_a)^*\eta=a^{-1}\eta$ for all $a \in \mathrm{GL}^+(2,\R$). 

Let now $\zeta=(\zeta^i_j) \in \Omega^1(F^+,\mathfrak{gl}(2,\R))$ be the connection form of an affine torsion-free connection $\nabla$ on $\Sigma$. We have the structure equations
$$
\aligned
\d\eta^i&=-\zeta^i_j\wedge\eta^j,\\
\d\zeta^i_j&=-\zeta^i_k\wedge\zeta^k_j+\frac{1}{2}R^i_{jkl}\eta^k\wedge\eta^l
\endaligned
$$
for real-valued curvature functions $R^i_{jkl}$ on $F^+$. As usual, we decompose the curvature functions $R^i_{jkl}$ into irreducible pieces, thus writing\footnote{We define $\eps_{ij}=-\eps_{ji}$ with $\eps_{12}=1$.}
$$
R^i_{jkl}=R_{jl}\delta^i_k-R_{jk}\delta^i_l+R \eps_{kl}\delta^i_j
$$
for unique real-valued functions $R_{ij}=R_{ji}$ and $R$ on $F^+$. Contracting over $i,k$ we get
$$
R^k_{jkl}=2R_{jl}-R_{jl}+R\eps_{jl}=R_{jl}+R\eps_{jl}.
$$ 
Consequently, denoting by $\mathrm{Ric}^{\pm}(\nabla)$ the symmetric and anti-symmetric part of the Ricci tensor of $\nabla$, we obtain
$$
\upsilon^*\left(\mathrm{Ric}^+(\nabla)\right)=R_{ij}\eta^i\otimes \eta^j \quad \text{and}\quad  \upsilon^*\left(\mathrm{Ric}^-(\nabla)\right)=R\eps_{ij}\eta^i\otimes \eta^j. 
$$
In two dimensions, the (projective)~\textit{Schouten tensor} of $\nabla$ is defined as $\mathrm{Sch}(\nabla)=\mathrm{Ric}^+(\nabla)-\frac{1}{3}\mathrm{Ric}^{-}(\nabla)$, so that writing 
$$
\upsilon^*\left(\mathrm{Sch}(\nabla)\right)=S_{ij}\eta^i\otimes \eta^j,
$$
we have
$$
S=(S_{ij})=\begin{pmatrix}R_{11} & R_{12}-\frac{1}{3}R \\ R_{12}+\frac{1}{3}R & R_{22}\end{pmatrix}.
$$

We now define a right $G$-action on $F^+\times \R_2$ by the rule
\begin{equation}\label{gaction}
(u,\xi)\cdot (b\rtimes a)=\left(\det a^{-1} a^{-1} \circ u,\xi a \det a + b\det a\right),
\end{equation}
for all $b\rtimes a \in G$. Here $\xi$ denotes the projection onto the second factor of $F^+\times \R_2$. Let $\pi : F^+\times \R_2 \to \Sigma$ denote the basepoint projection of the first factor. The $G$-action~\eqref{gaction} turns $\pi : F\times \R_2 \to \Sigma$ into a principal right $G$-bundle over $\Sigma$. 
On $F^+\times \R_2$ we define an $\mathfrak{sl}(3,\R)$-valued $1$-form
\begin{equation}\label{cartanconnpref}
\theta=\left(\begin{array}{cc} -\frac{1}{3}\tr \zeta-\xi \eta & \d \xi-\xi\zeta-S^t\eta-\xi\eta\xi\\  \eta & \zeta-\frac{1}{3}\mathrm{I}\tr \zeta+\eta\xi  \end{array}\right).
\end{equation}
Then $(\pi : F^+ \times \R_2 \to \Sigma,\theta)$ is a Cartan geometry of type $(\mathrm{SL}(3,\R),G)$ satisfying the properties (i),(ii) and (iii) for the projective structure defined by $\nabla$. It follows from the uniqueness part of Cartan's construction that $(\pi : F^+ \times \R_2\to\Sigma,\theta)$ is isomorphic to Cartan's bundle. 
\end{ex}

\begin{rmk}
Note that Example~\ref{prefconn} shows that the quotient of Cartan's bundle by the normal subgroup $\R^2\rtimes\left\{\mathrm{Id}\right\}\subset G$ is isomorphic to the principal right $\rm GL^+(2,\R)$-bundle of positively oriented coframes $\upsilon : F^+\to \Sigma$. 
\end{rmk}
     
\subsection{Conformal connections}\label{confconsec}

Recall that an affine torsion-free connection $\nabla$ on $\Sigma$ is called a \textit{Weyl connection} or \textit{conformal connection} if $\nabla$ preserves a conformal structure $[g]$ on $\Sigma$. A torsion-free connection $\nabla$ is $[g]$-conformal if for some -- and hence any -- Riemannian metric $g$ defining $[g]$ there exists a $1$-form $\beta\in \Omega^1(\Sigma)$ such that
\begin{equation}\label{charconfcon}
\nabla g = 2 \beta \otimes g. 
\end{equation}
 Conversely, given a pair $(g,\beta)$ on $\Sigma$, it follows from Koszul's identity that there exists a unique affine torsion-free connection $\nabla$ which satisfies~\eqref{charconfcon}, namely
\begin{equation}\label{confconndown}
{}^{(g,\beta)}\nabla={}^g\nabla+g\otimes \beta^{\sharp}-\iota(\beta),
\end{equation}
where ${}^g\nabla$ denotes the Levi-Civita connection of $g$ and $\beta^{\sharp}$ the $g$-dual vector field to $\beta$. For a smooth real-valued function $u$ on $\Sigma$ we have
\begin{equation}\label{levicivitaconfresc}
{}^{\exp(2u)g}\nabla={}^g\nabla-g\otimes{}^g\nabla u+\iota(\d u), 
\end{equation}
and hence
\begin{equation}\label{confresc}
{}^{(\exp(2u)g,\beta+\d u)}\nabla={}^{(g,\beta)}\nabla.
\end{equation}
Fixing a Riemannian metric $g$ defining $[g]$ identifies the space of $[g]$-conformal connections with the space of $1$-forms on $\Sigma$. It follows that the space of $[g]$-conformal connections is an affine space modelled on $\Omega^1(\Sigma)$. A conformal structure $[g]$ together with a choice of a particular $[g]$-conformal connection $\nabla$ is called a~\textit{Weyl structure}. We will denote the space of Weyl structures on $\Sigma$ by $\mathfrak{W}(\Sigma)$. Furthermore, a Weyl structure $([g],\nabla)$ is called~\textit{exact} if $\nabla$ is the Levi-Civita connection of a Riemannian metric $g$ defining $[g]$. From~\eqref{levicivitaconfresc} we see that the space of exact Weyl structures on $\Sigma$ is in one-to-one correspondence with the space of Riemannian metrics on $\Sigma$ modulo constant rescaling.  
   
Let now $\Sigma$ be oriented (pass to the orientable double cover in case $\Sigma$ is not orientable) and fix a Riemannian metric $g$ and a $1$-form $\beta$ on $\Sigma$. On the bundle $\upsilon : F^+\to \Sigma$ of positively oriented coframes of $\Sigma$ there exist unique real-valued functions $g_{ij}=g_{ji}$ and $b_i$ such that
\begin{equation}\label{someid99}
\upsilon^*g=g_{ij}\eta^i\otimes \eta^j \quad \text{and} \quad \upsilon^*\beta=b_i\eta^i. 
\end{equation}
The $\R_2$-valued function $b=(b_i)$ satisfies the equivariance property
\begin{equation}\label{equivform}
(R_a)^*b=ba. 
\end{equation}
The~\textit{Levi-Civita connection form} of $g$ is the unique $\mathfrak{gl}(2,\R)$-valued connection $1$-form $\varphi=(\varphi^i_j)$ satisfying
$$
\d \eta^i=-\varphi^i_j\wedge\eta^j\quad \text{and}\quad \d g_{ij}=g_{kj}\varphi^k_i+g_{ik}\varphi^k_j. 
$$
The exterior derivative of $\varphi$ can be expressed as
$$
\d \varphi^i_j=-\varphi^i_k\wedge\varphi^k_j+g_{jk}K\eta^i\wedge\eta^k, 
$$
where the real-valued function $K$ is constant on the $\upsilon$-fibres and hence can be regarded as a function on $\Sigma$ which is the~\textit{Gauss-curvature} of $g$. Infinitesimally,~\eqref{equivform} translates to the existence of real-valued functions $b_{ij}$ on $F^+$ satisfying
$$
\d b_i=b_j\varphi^j_i+b_{ij}\eta^j. 
$$
From~\eqref{confconndown} we see that the connection form $\zeta=(\zeta^i_j)$ of the $[g]$-conformal connection ${}^{(g,\beta)}\nabla$ can be written as
$$
\zeta^i_j=\varphi^i_j+\left(b_kg^{ki}g_{jl}-\delta^i_jb_l-\delta^i_lb_j\right)\eta^l,
$$
where the functions $g^{ij}=g^{ji}$ satisfy $g^{ik}g_{kj}=\delta^i_j$. It follows with the equivariance property of $\eta$ and~\eqref{someid99} that the equations $g_{11}\equiv g_{22}\equiv 1$ and $g_{12}\equiv 0$ define a reduction $\lambda : F^+_g \to \Sigma$ of $\upsilon :  F^+ \to \Sigma$ with structure group $\mathrm{SO}(2)$ which consists of the positively oriented coframes that are also $g$-orthonormal. On $F^+_g$ we obtain
$$
\aligned
0&=\d g_{11}=2(g_{11}\varphi^1_1+g_{12}\varphi^2_1)=2\varphi^1_1,\\ 
0&=\d g_{22}=2(g_{21}\varphi^1_2+g_{22}\varphi^2_2)=2\varphi^2_2,\\
0&=\d g_{12}=g_{11}\varphi^1_2+g_{12}\varphi^2_2+g_{12}\varphi^1_1+g_{22}\varphi^2_1=\varphi^1_2+\varphi^2_1. 
\endaligned
$$
Therefore, writing $\varphi:=\varphi^2_1$ we have the following structure equations on $F^+_g$
$$
\aligned
\d \eta^1&=-\eta^2\wedge\levcon,\\
\d \eta^2&=\phantom{-}\eta^1\wedge\levcon,\\
\d \levcon&=-K\eta^1\wedge\eta^2,\\
\d b_i&=b_{ij}\eta^j+\eps_{ij}b^j\varphi. 
\endaligned
$$ 
Furthermore, the connection form $\zeta$ pulls-back to $F^+_g$ to become\footnote{In order to keep notation uncluttered we omit writing $\lambda^*$ for pull-backs by $\lambda$.}
$$
\zeta=\begin{pmatrix} -b_1\eta^1-b_2\eta^2 & b_1\eta^2-b_2\eta^1-\varphi\\ -b_1\eta^2+b_2\eta^1+\varphi & -b_1\eta^1-b_2\eta^2\end{pmatrix}=\begin{pmatrix} -\beta & \star \beta -\varphi \\ \varphi -\star \beta & -\beta\end{pmatrix},
$$
where $\star$ denotes the Hodge-star with respect to the orientation and metric $g$. A simple calculation now shows that the components of the Schouten tensor are
$$
S=\begin{pmatrix} K+b_{11}+b_{22} & -\frac{1}{3}(b_{12}+b_{21})\\ \frac{1}{3}(b_{12}-b_{21}) & K+b_{11}+b_{22}\end{pmatrix}. 
$$
Note that the diagonal entry of $S$ is $K-\delta \beta$ where $\delta$ denotes the co-differential with respect to the orientation and metric $g$.  

If we now apply the formula~\eqref{cartanconnpref} for the Cartan connection of the projective structure defined by ${}^{(g,\beta)}\nabla$ -- whilst setting $\xi \equiv 0$ -- we obtain
\begin{equation}\label{weylgauge}
\phi=\left(\begin{array}{ccc} \frac{2}{3}\beta & (\delta \beta-K)\eta^1+\frac{1}{3}(\star \d \beta)\eta^2 & -\frac{1}{3}(\star \d \beta)\eta^1+(\delta\beta-K)\eta^2\\ \eta^1 & -\frac{1}{3}\beta & \star \beta-\levcon\\ \eta^2 & \levcon-\star\beta &-\frac{1}{3}\beta\end{array}\right). 
\end{equation}
Denoting by $(\pi : B\to \Sigma, \theta)$ the Cartan geometry associated to the projective structure defined by ${}^{(g,\beta)}\nabla$, it follows from the uniqueness part of Cartan's construction that there exists an $\mathrm{SO}(2)$-bundle embedding $\psi : F^+_g \to B$ so that $\psi^*\theta=\phi$.

For the sake of completeness we also record
$$
\d \phi+\phi\wedge\phi=\left(\begin{array}{ccc} 0 & \hat W_1 \phi^1_0\wedge\phi^2_0 &\hat W_2 \phi^1_0\wedge\phi^2_0\\ 0 & 0 & 0\\ 0 & 0 &0 \end{array}\right),
$$
where
$$
\hat W_1\phi^1_0+\hat W_2\phi^2_0=-\star \d (K-\delta\beta)+\frac{1}{3}\d \star \d \beta-2(K-\delta\beta)\star\beta-\frac{2}{3}\beta\star\d\beta. 
$$

\section{Flexibility and rigidity of holomorphic curves}

A conformal structure $[g]$ on the oriented surface $\Sigma$ is the same as a smooth choice of a positively oriented orthonormal coframe for every point $p \in \Sigma$, well defined up to rotation and scaling; in other words, a smooth section of $F^+/\mathrm{CO}(2)$ where $\mathrm{CO}(2)=\R^+\times \mathrm{SO}(2)$ is the linear conformal group. 

Assume $\Sigma$ to be equipped with a projective structure $\mathfrak{p}$ and let $(\pi : B \to \Sigma,\theta)$ denote its associated Cartan geometry. Recall that $F^+$ is obtained as the quotient of $B$ by the normal subgroup $\R^2\rtimes\left\{\mathrm{Id}\right\}\subset G$, hence the conformal structures on $\Sigma$ are in one-to-one correspondence with the sections of $\tau : B/\left(\R^2\rtimes \mathrm{CO}(2)\right) \to \Sigma$, where $\tau$ denotes the base-point projection. By construction, the typical fibre of $\tau$ is the homogeneous space $\mathrm{GL}^+(2,\R)/\mathrm{CO}(2)$ which is diffeomorphic to the open unit disk in $\C$.   

In~\cite{MR728412,MR812312} it was shown that $\mathfrak{p}$ induces a complex structure $J$ on the space $B/\left(\R^2\rtimes \mathrm{CO}(2)\right)$, thus turning this quotient into a complex surface $Z$. The complex structure on $Z$ can be characterised in terms of the Cartan connection $\theta$ on $B$. To this end we write the structure equations of $\theta$ in complex form. 
\begin{lemma}\label{complexstruceq}
Writing
$$
\aligned
\omega_1&=\theta^1_0+\i\theta^2_0,\\
\omega_2&=(\theta^1_1-\theta^2_2)+\i\left(\theta^1_2+\theta^2_1\right),\\
\xi&=\theta^0_1+\i\theta^0_2,\\
\psi&=-\frac{1}{2}\left(3\theta^0_0+\i(\theta^1_2-\theta^2_1)\right),
\endaligned
$$
we have
\begin{equation}\label{struceqcplxform}
\aligned
\d \omega_1&=\omega_1\wedge\psi+\frac{1}{2}\ov{\omega_1}\wedge\omega_2,\\
\d \omega_2&=-\omega_1\wedge\xi+\omega_2\wedge\psi+\ov{\psi}\wedge\omega_2,\\
\d \xi&=W\ov{\omega_1}\wedge\omega_1-\frac{1}{2}\ov{\xi}\wedge\omega_2+\ov{\psi}\wedge\xi,\\
\d \psi&=-\frac{1}{2}\ov{\omega_1}\wedge\xi+\frac{1}{4}\ov{\omega_2}\wedge\omega_2+\ov{\xi}\wedge\omega_1,
\endaligned
\end{equation}
where $W=\frac{1}{2}(W_2-\i W_1)$ and $\ov{\alpha}$ denotes complex conjugation of the com\-plex-valued form $\alpha$.  
\end{lemma}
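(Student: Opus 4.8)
The plan is to reduce the four complex identities \eqref{struceqcplxform} to the single real structure equation \eqref{struceqprojsurf} by direct computation. First I would expand \eqref{struceqprojsurf} into its scalar components, writing $\d\theta^\mu_\nu=-\theta^\mu_\lambda\wedge\theta^\lambda_\nu+\Omega^\mu_\nu$ with $\mu,\nu,\lambda$ ranging over $0,1,2$ and $\Omega^\mu_\nu$ the entries of the curvature matrix on the right-hand side of \eqref{struceqprojsurf}, so that the only nonzero curvature terms are $\Omega^0_1=W_1\,\theta^1_0\wedge\theta^2_0$ and $\Omega^0_2=W_2\,\theta^1_0\wedge\theta^2_0$. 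Since $\theta$ is $\mathfrak{sl}(3,\R)$-valued I would record the trace constraint $\theta^0_0+\theta^1_1+\theta^2_2=0$, which will be used at the end of each computation to eliminate $\theta^0_0$ in favour of $\theta^1_1$ and $\theta^2_2$.

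Next I would isolate the one identity that explains the appearance of the coefficient $W$. From $\omega_1=\theta^1_0+\i\theta^2_0$ one computes $\ov{\omega_1}\wedge\omega_1=2\i\,\theta^1_0\wedge\theta^2_0$, hence $\theta^1_0\wedge\theta^2_0=-\frac{\i}{2}\,\ov{\omega_1}\wedge\omega_1$. Applying $\d$ to $\xi=\theta^0_1+\i\theta^0_2$, the curvature contribution is $\Omega^0_1+\i\,\Omega^0_2=(W_1+\i W_2)\,\theta^1_0\wedge\theta^2_0=\frac{1}{2}(W_2-\i W_1)\,\ov{\omega_1}\wedge\omega_1=W\,\ov{\omega_1}\wedge\omega_1$, which is exactly the leading term of the third equation in \eqref{struceqcplxform} and fixes the definition $W=\frac{1}{2}(W_2-\i W_1)$.

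With these preliminaries in place I would treat each equation in turn, computing $\d\omega_1,\d\omega_2,\d\xi,\d\psi$ from the scalar structure equations and then expanding the proposed right-hand sides into real and imaginary parts, checking that they agree monomial by monomial. For example, for $\d\omega_1=\d\theta^1_0+\i\,\d\theta^2_0$ I would expand $\omega_1\wedge\psi+\frac{1}{2}\ov{\omega_1}\wedge\omega_2$, separate real and imaginary parts, and substitute $\theta^0_0=-\theta^1_1-\theta^2_2$; the real part then collapses to $2\,\theta^1_0\wedge\theta^1_1+\theta^1_0\wedge\theta^2_2+\theta^2_0\wedge\theta^1_2=\d\theta^1_0$ and the imaginary part to $\theta^1_0\wedge\theta^2_1+\theta^2_0\wedge\theta^1_1+2\,\theta^2_0\wedge\theta^2_2=\d\theta^2_0$, as required. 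The equations for $\omega_2$, $\xi$ and $\psi$ are verified in the same way, the only new feature being the terms $\ov{\psi}\wedge\omega_2$, $\ov{\xi}\wedge\omega_1$ and $\ov{\omega_2}\wedge\omega_2$ produced by the conjugated pieces of $\psi$ and $\xi$.

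The computation carries no conceptual obstacle: every step is forced by \eqref{struceqprojsurf} together with the trace constraint. The only real difficulty is the bookkeeping — tracking the sums over $\lambda=0,1,2$, correctly distributing the factors of $\i$ through each wedge product, and systematically trading the scaling form $\theta^0_0$ for the diagonal entries $\theta^1_1,\theta^2_2$. I would organise the work by tabulating, for each of the finitely many wedge monomials $\theta^\mu_\nu\wedge\theta^\rho_\sigma$, its coefficient on both sides of each identity, reducing the entire verification to checking that a handful of rational coefficients agree.
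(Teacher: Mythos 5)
Your proposal is correct and follows exactly the route the paper takes: the paper's proof is the one-line remark that the lemma is ``a straightforward translation of the structure equations~\eqref{struceqprojsurf} into complex form,'' and you simply carry out that translation, using $\d\theta^\mu_\nu=-\theta^\mu_\lambda\wedge\theta^\lambda_\nu+\Omega^\mu_\nu$ together with the trace constraint $\theta^0_0=-\theta^1_1-\theta^2_2$. The details you do exhibit check out, e.g.\ $\ov{\omega_1}\wedge\omega_1=2\i\,\theta^1_0\wedge\theta^2_0$ forces $W=\frac{1}{2}(W_2-\i W_1)$, and your real/imaginary parts of $\omega_1\wedge\psi+\frac{1}{2}\ov{\omega_1}\wedge\omega_2$ agree with $\d\theta^1_0$ and $\d\theta^2_0$.
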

\begin{proof}
The proof is a straightforward translation of the structure equations~\eqref{struceqprojsurf} into complex form. 
\end{proof}

Using Lemma~\ref{complexstruceq} we can prove:
\begin{prp}\label{charcplxstruccart}
There exists a unique integrable almost complex structure $J$ on $Z$ such that a complex-valued $1$-form $\alpha$ on $Z$ is a $(1,\! 0)$-form for $J$ if and only if the pullback of $\alpha$ to $B$ is a linear combination of $\omega_1$ and $\omega_2$.  
\end{prp}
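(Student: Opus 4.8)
The plan is to construct $J$ explicitly on $Z=B/(\R^2\rtimes\mathrm{CO}(2))$ by declaring the prescribed $(1,0)$-forms, and then to verify that this declaration descends to a well-defined, integrable almost complex structure. The starting observation is that the quotient projection $q:B\to Z$ has fibres that are the orbits of $\R^2\rtimes\mathrm{CO}(2)$, so a complex-valued $1$-form on $Z$ pulls back to a $1$-form on $B$ that is both horizontal for $q$ (it annihilates the vertical tangent spaces of $q$) and invariant under the group $\R^2\rtimes\mathrm{CO}(2)$. The first thing I would do is identify the Lie algebra $\mathfrak{r}^2\rtimes\mathfrak{co}(2)\subset\mathfrak{g}$ and compute which of $\omega_1,\omega_2$ (and their conjugates, together with $\xi,\psi$) annihilate the corresponding fundamental vector fields. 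Since $\R^2\rtimes\mathrm{CO}(2)$ is cut out inside $G$ by the conditions sending $\theta^1_0,\theta^2_0$ to zero and forcing the $\mathrm{GL}^+(2,\R)$-part into $\mathfrak{co}(2)$, I expect precisely $\omega_1$ and $\omega_2$ to be $q$-semibasic, with $\xi$ and $\psi$ containing the vertical directions; this is exactly why the proposition singles out $\omega_1,\omega_2$.

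Next I would verify the two requirements that make the span of $\{\omega_1,\omega_2\}$ the pullback of a genuine $(1,0)$-bundle on $Z$. First, invariance: using the equivariance of $\theta$ under the $G$-right action, I would check that the $\R$-span (over $C^\infty$) of $\omega_1,\omega_2$ is preserved by $(R_h)^*$ for every $h\in\R^2\rtimes\mathrm{CO}(2)$, so that the rank-two complex subbundle they define on $T^*B$ descends to a rank-two complex subbundle $\Lambda^{1,0}\subset \C\otimes T^*Z$. Second, the almost complex structure: because $\omega_1,\omega_2,\ov{\omega_1},\ov{\omega_2}$ together span the complexified semibasic forms and $\Lambda^{1,0}\cap\ov{\Lambda^{1,0}}=0$ (the four forms are pointwise linearly independent, which I would read off from the definitions), declaring $\Lambda^{1,0}$ to be the $(1,0)$-forms determines a unique almost complex structure $J$ on $Z$. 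Uniqueness is then automatic, since $J$ is forced by its $(1,0)$-cotangent space.

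Integrability is where the real content lies, and it is the step I expect to be the main obstacle. By the Newlander--Nirenberg theorem it suffices to show that $\d\alpha$ is in the ideal generated by $(1,0)$-forms whenever $\alpha$ is; concretely, I would show that the exterior derivative of each of $\omega_1,\omega_2$ lies in the algebraic ideal generated by $\omega_1,\omega_2$ together with the vertical forms, i.e. that modulo the vertical ideal there is no $\ov{\omega_1}\wedge\ov{\omega_2}$ term. Here I would invoke the complex structure equations \eqref{struceqcplxform} from Lemma~\ref{complexstruceq}: the equation $\d\omega_1=\omega_1\wedge\psi+\tfrac12\ov{\omega_1}\wedge\omega_2$ has every term containing a factor of $\omega_1$ or $\omega_2$, and similarly $\d\omega_2=-\omega_1\wedge\xi+\omega_2\wedge\psi+\ov{\psi}\wedge\omega_2$ has every term containing such a factor. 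The key point is the absence of any $\ov{\omega_1}\wedge\ov{\omega_2}$ component, which is precisely the Cauchy--Riemann-type condition guaranteeing that $\Lambda^{1,0}$ is a Frobenius-integrable (hence formally integrable) almost complex structure.

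The main subtlety I would watch carefully is the bookkeeping between forms on $B$ and forms on $Z$: the structure equations \eqref{struceqcplxform} involve $\xi$ and $\psi$, which do not descend to $Z$, so I must make sure that when I compute $\d\omega_1$ and $\d\omega_2$ the offending terms $\omega_1\wedge\psi$ and $\omega_2\wedge\psi$, though not themselves pulled back from $Z$, still lie in the ideal generated by $q$-semibasic $(1,0)$-forms. Because each such term retains a factor of $\omega_1$ or $\omega_2$, the Nijenhuis tensor of $J$ on $Z$ vanishes once we quotient out the vertical directions. I would conclude by appealing to Newlander--Nirenberg to upgrade formal integrability to the existence of a genuine integrable complex structure $J$, which is what turns $Z$ into a complex surface and completes the proof.
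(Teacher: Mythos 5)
Your proposal is correct and follows essentially the same route as the paper: identify the Lie algebra of $H=\R^2\rtimes\mathrm{CO}(2)$ to see that $\omega_1,\omega_2$ are semibasic, use the equivariance of $\theta$ under the $H$-right action to show their span descends to a well-defined rank-two subbundle defining $J$, and deduce integrability from the structure equations of Lemma~\ref{complexstruceq} together with the Newlander--Nirenberg theorem. Your observation that every term of $\d\omega_1$ and $\d\omega_2$ retains a factor of $\omega_1$ or $\omega_2$ (so no $\ov{\omega_1}\wedge\ov{\omega_2}$ component survives) is exactly the point the paper leaves implicit when it cites Lemma~\ref{complexstruceq}.
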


\begin{proof}
By definition, we have $Z=B/H$ where $H=\R^2\rtimes \mathrm{CO}(2)\subset G$. The Lie algebra $\mathfrak{h}$ of $H$ consists of matrices of the form
$$
\left(\begin{array}{rrc} -2h_4 & h_1 & h_2 \\ 0 & h_4 & h_3 \\ 0 & -h_3 & h_4\end{array}\right),
$$
where $h_1,\ldots,h_4$ are real numbers. Therefore, since the Cartan connection maps every fundamental vector field $X_v$ on $B$ to its generator $v$, the $1$-forms $\omega_1,\omega_2$ are semibasic for the projection to $Z$, that is, vanish on vector fields that are tangent to the fibres of $B \to Z$. Consequently, the  pullback to $B$ of a $1$-form on $Z$ is a linear combination of $\omega_1,\omega_2$ and their complex conjugates. We write the elements of $H$ in the following form
$$
z\rtimes r\e^{\i\phi}=\left(\begin{array}{ccc} r^{-2} & \mathrm{Re}(z) & \mathrm{Im}(z) \\ 0 & r\cos\phi & r \sin\phi \\ 0 & -r \sin\phi & r \cos \phi\end{array}\right),
$$ 
where $z \in \C$ and $r\e^{\i\phi}\in \C^*$. The equivariance of $\theta$ under the $G$-right action gives
$$
\left(R_{b\rtimes a}\right)^*\theta=(b\rtimes a)^{-1}\theta(b\rtimes a)=(-(\det a)ba^{-1}\rtimes a^{-1})\theta (b\rtimes a)
$$
which implies
\begin{equation}\label{rightactionform1}
\left(R_{\rot}\right)^*\omega_1=\frac{1}{r^3}\e^{\i\phi}\omega_1
\end{equation}
and
\begin{equation}\label{rightactionform2}
\left(R_{\rot}\right)^*\omega_2=\frac{z}{r}\e^{\i\phi}\omega_1+\e^{2\i\phi}\omega_2,
\end{equation}
thus showing that there exists a unique almost complex structure $J$ on $Z$ such that a complex-valued $1$-form $\alpha$ on $Z$ is a $(1,\! 0)$-form for $J$ if and only if the pullback of $\alpha$ to $B$ is a linear combination of $\omega_1$ and $\omega_2$. The integrability of $J$ is now a consequence of the complex form of the structure equations given in Lemma~\ref{complexstruceq} and the Newlander-Nirenberg theorem.  
\end{proof}
Using this characterisation we have~\cite[Theorem 3]{MR3144212}:
\begin{thm}\label{mainweylmetri}
Let $(\Sigma,\mathfrak{p})$ be an oriented projective surface. A conformal structure $[g]$ on $\Sigma$ is preserved by a conformal connection defining $\mathfrak{p}$ if and only if the image of $[g] : \Sigma \to Z$ is a holomorphic curve.  
\end{thm}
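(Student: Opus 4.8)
The plan is to translate the geometric condition ``$[g](\Sigma)$ is a holomorphic curve'' into a pointwise condition on the pullbacks of $\omega_1,\omega_2$, and then to match that condition against the normal form \eqref{weylgauge} of the Cartan connection of a conformal connection. Write $C=[g](\Sigma)\subset Z$ for the image of the section and let $B_C\subset B$ be its preimage under the projection $B\to Z$, a principal $H$-bundle over $\Sigma$ with $H=\R^2\rtimes\mathrm{CO}(2)$. Since $\omega_1=\theta^1_0+\i\theta^2_0$ is part of a coframe it is a nowhere-vanishing covector, while by Proposition~\ref{charcplxstruccart} the forms $\omega_1,\omega_2$ span the pullbacks of the $(1,\!0)$-forms on $Z$. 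As $\omega_1,\omega_2$ are semibasic for $B\to Z$, their wedge on $B_C$ is the pullback of the $(2,\!0)$-form on $C$, so a short linear-algebra argument shows that the real surface $C$ is a holomorphic curve precisely when $\omega_1\wedge\omega_2=0$ on $B_C$; equivalently, since $\omega_1\neq 0$, when $\omega_2=\mu\,\omega_1$ there for a complex-valued function $\mu$. The transformation laws \eqref{rightactionform1} and \eqref{rightactionform2} govern this condition and will be used throughout.

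For the direction ``$\Rightarrow$'' suppose $\nabla={}^{(g,\beta)}\nabla$ is a conformal connection defining $\mathfrak{p}$ and preserving $[g]$. Section~\ref{confconsec} supplies an $\mathrm{SO}(2)$-bundle embedding $\psi:F^+_g\to B$ with $\psi^*\theta=\phi$ given by \eqref{weylgauge}. Reading off the lower-right block of \eqref{weylgauge} yields $\phi^1_1-\phi^2_2=0$ and $\phi^1_2+\phi^2_1=0$, hence $\psi^*\omega_2=0$. Because $\psi(F^+_g)\subset B_C$ and, by \eqref{rightactionform1}--\eqref{rightactionform2}, the form $\omega_1\wedge\omega_2$ transforms under the $H$-action by the nowhere-vanishing factor $r^{-3}\e^{3\i\phi}$, the identity $\omega_1\wedge\omega_2=0$ propagates from $\psi(F^+_g)$ to all of $B_C$; thus $C$ is a holomorphic curve.

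For the direction ``$\Leftarrow$'' assume $C$ is a holomorphic curve, so $\omega_2=\mu\,\omega_1$ on $B_C$ with $\mu$ globally defined. Using \eqref{rightactionform1}--\eqref{rightactionform2} I compute the equivariance of $\mu$, finding $\mu(b\cdot(\rot))=r^2 z+r^3\e^{\i\phi}\mu(b)$. In particular the $\R^2$-part of the action translates $\mu$ by an arbitrary complex number, so the zero locus $P=\{\mu=0\}$ meets every fibre of $B_C\to\Sigma$; moreover, once $\mu(b)=0$ one has $\mu(b\cdot(\rot))=r^2 z$, which vanishes exactly when $z=0$, so $P$ meets each fibre in a single $\mathrm{CO}(2)$-orbit. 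Hence $P\to\Sigma$ is a principal $\mathrm{CO}(2)$-subbundle on which $\omega_2$ vanishes, and its image in $F^+=B/\R^2$ is precisely the $\mathrm{CO}(2)$-reduction determined by $[g]$. Choosing a representative metric $g$ of $[g]$ then singles out the $\mathrm{SO}(2)$-subbundle $F^+_g\subset P$ of $g$-orthonormal coframes and, via the injection $P\hookrightarrow B$, an $\mathrm{SO}(2)$-bundle embedding $\psi:F^+_g\to B$ with $\psi^*\omega_2=0$.

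It remains to recognise $\phi:=\psi^*\theta$ as the connection \eqref{weylgauge}. The entries $\phi^i_0$ equal the tautological coframe $\eta^i$ of $F^+_g$ because $\theta^i_0$ descends to the tautological form on $F^+$; the condition $\psi^*\omega_2=0$ forces $\phi^1_1=\phi^2_2$ and $\phi^1_2=-\phi^2_1$; and $\phi$ is $\mathrm{SO}(2)$-equivariant and $\mathfrak{sl}(3,\R)$-valued. Feeding these into the $(i,0)$-components of \eqref{struceqprojsurf} and comparing with the Levi-Civita structure equations of $g$ on $F^+_g$ shows that $\phi^0_0$ is semibasic, so that $\beta:=\tfrac{3}{2}\phi^0_0$ descends to a $1$-form on $\Sigma$; the remaining components of the structure equations then pin down $\phi$ to be exactly \eqref{weylgauge}. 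Since \eqref{weylgauge} is the pullback of the Cartan connection of ${}^{(g,\beta)}\nabla$, the uniqueness part of Cartan's construction identifies $\mathfrak{p}$ with the projective structure of ${}^{(g,\beta)}\nabla$, a conformal connection preserving $[g]$. I expect this last reconstruction to be the \emph{main obstacle}: one must verify that solving the structure equations yields a globally well-defined $\beta$ and reproduces \eqref{weylgauge} with no undetermined residual term, and that the extension from the $\mathrm{SO}(2)$-subbundle back to all of $B$ by $G$-equivariance is legitimate. The direction ``$\Rightarrow$'' and the linear-algebra characterisation of holomorphicity are routine by comparison.
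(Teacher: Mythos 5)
Your overall strategy is sound, but note first that the paper does not actually prove Theorem~\ref{mainweylmetri}: it is quoted from~\cite[Theorem 3]{MR3144212}, so there is no in-paper proof to match against. What you have written is a self-contained argument assembled from exactly the ingredients the paper does set up, and it is essentially correct: the characterisation of holomorphicity of $D=[g](\Sigma)$ as $\omega_1\wedge\omega_2=0$, equivalently $\omega_2=\mu\,\omega_1$, on the preimage of $D$ in $B$; the equivariance law $\mu(b\cdot(\rot))=r^2z+r^3\e^{\i\phi}\mu(b)$; and the resulting $\mathrm{CO}(2)$-reduction $\{\mu=0\}$ are precisely the computations the paper performs inside the proof of Lemma~\ref{keyprop} (there the function is called $f$), while the ``$\Rightarrow$'' direction via $\psi^*\omega_2=0$ from \eqref{weylgauge} is the computation displayed verbatim in that same proof. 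So your route is the natural one given the paper's machinery.

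Two points in the converse direction deserve to be tightened, and they are exactly where you flag the main obstacle. First, $\theta^0_0$ is \emph{not} semibasic on the $\mathrm{CO}(2)$-reduction $P=\{\mu=0\}$: the Lie algebra element generating the dilation part of $\mathrm{CO}(2)$ has nonzero $(0,0)$-entry, so $\theta^0_0$ pairs nontrivially with that fundamental vector field. It becomes semibasic (and $\mathrm{SO}(2)$-invariant) only after you pass to the $\mathrm{SO}(2)$-subbundle $F^+_g\subset P$ determined by a choice of representative metric $g$; accordingly $\beta=\tfrac32\phi^0_0$ depends on that choice, consistently with \eqref{confresc}. Second, rather than trying to reproduce all of \eqref{weylgauge} (the $\theta^0_j$-entries encode the Schouten tensor and come for free from Cartan uniqueness), it is cleaner to set $\zeta^i_j:=\phi^i_j-\delta^i_j\phi^0_0$, check from the $(i,0)$-components of \eqref{struceqprojsurf} that $\d\eta^i=-\zeta^i_j\wedge\eta^j$, observe that $\psi^*\omega_2=0$ together with $\tr\phi=0$ forces $\zeta^1_1=\zeta^2_2=-\beta$ and $\zeta^1_2+\zeta^2_1=0$, which is exactly the condition $\nabla g=2\beta\otimes g$, and then verify that the $\mathrm{CO}(2)$-equivariant splitting $P\to B$ over the $[g]$-reduction of $F^+$ extends $\mathrm{GL}^+(2,\R)$-equivariantly to a section of $B\to F^+$, so that $\zeta$ genuinely is the connection form of an affine connection on $\Sigma$ representing $\mathfrak{p}$ (by Example~\ref{prefconn} and the uniqueness of Cartan's construction). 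With these details supplied your argument is complete.
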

 
\subsection{Chern-class of the co-normal bundle}
Here we use the characterisation of the complex structure on $Z$ in terms of the Cartan connection $\theta$ to compute the degree of the normal bundle of a holomorphic curve $D\subset Z$ arising as the image of a section of $Z \to \Sigma$.  
  
\begin{lemma}\label{keyprop}
Let $(\Sigma,\mathfrak{p})$ be a closed oriented projective surface and $[g] : \Sigma \to Z$ a section with holomorphic image. Then the normal bundle of the holomorphic curve $D=[g](\Sigma)\subset Z$ has degree $2\,\chi(\Sigma)$.
\end{lemma}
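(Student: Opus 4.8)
The plan is to realise the co-normal bundle $N^*$ of $D$ explicitly on the $\mathrm{SO}(2)$-reduction $F^+_g$ and to read off its degree from the curvature of a Hermitian connection. Recall that the conformal connection preserving $[g]$ determines an $\mathrm{SO}(2)$-bundle embedding $\psi : F^+_g \to B$ with $\psi^*\theta=\phi$, where $\phi$ is the Weyl gauge~\eqref{weylgauge}. Substituting~\eqref{weylgauge} into the definitions of $\omega_1,\omega_2$ from Lemma~\ref{complexstruceq}, I would first compute
$$
\psi^*\omega_1=\eta^1+\i\eta^2, \qquad \psi^*\omega_2=0.
$$
The vanishing $\psi^*\omega_2=0$ reconfirms via Proposition~\ref{charcplxstruccart} that $D$ is holomorphic and, more to the point, exhibits $\omega_2$ as a $(1,\!0)$-form on $Z$ whose restriction to $D$ spans the co-normal bundle $N^*$, while $\psi^*\omega_1$ spans the holomorphic cotangent bundle of $D$.

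Next I would use the equivariance~\eqref{rightactionform2} to pin down $N^*$ as a bundle. Restricting the right $H$-action to the residual structure group $\mathrm{SO}(2)\subset H$ (that is, $z=0$, $r=1$), equation~\eqref{rightactionform2} reduces to $(R_{\e^{\i\phi}})^*\omega_2=\e^{2\i\phi}\omega_2$. Hence declaring $\omega_2$ to have unit length defines an $\mathrm{SO}(2)$-invariant, and therefore well defined, Hermitian bundle metric on $N^*$; this is the canonical metric alluded to in the introduction. In the unitary frame $\omega_2$ the connection form is extracted from the $\omega_2$-terms of the structure equation for $\d\omega_2$ in~\eqref{struceqcplxform}; since $\xi$ pulls back to a multiple of $\omega_1$ (so that $\psi^*(\omega_1\wedge\xi)=0$), this produces the purely imaginary form $\ov\psi-\psi$, which one checks is the Chern connection of the metric above.

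Differentiating once more and using the $\d\psi$-equation in~\eqref{struceqcplxform} together with the relation $\psi^*\xi=\bigl((\delta\beta-K)-\tfrac{\i}{3}\star\d\beta\bigr)\psi^*\omega_1$, which itself follows from~\eqref{weylgauge}, yields the curvature
$$
\Theta = 2\i\,(K-\delta\beta)\,\eta^1\wedge\eta^2,
$$
the real part $K-\delta\beta$ being exactly the diagonal entry of the Schouten tensor recorded in Section~\ref{confconsec}. Writing $\d A=\eta^1\wedge\eta^2$ for the area form of $g$, the degree of $N^*$ equals $\tfrac{\i}{2\pi}\int_\Sigma\Theta=-\tfrac{1}{\pi}\int_\Sigma(K-\delta\beta)\,\d A$. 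The co-differential integrates to zero over the closed surface $\Sigma$, and Gauss--Bonnet gives $\int_\Sigma K\,\d A=2\pi\,\chi(\Sigma)$; hence $\deg N^*=-2\,\chi(\Sigma)$ and $\deg N=-\deg N^*=2\,\chi(\Sigma)$, as claimed. As a consistency check, for $\Sigma=S^2$ this returns $\deg N=4$, matching $N\simeq\mathcal{O}(4)$.

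The step I expect to be the main obstacle is the middle one: correctly identifying the induced connection on $N^*$ and computing its curvature from~\eqref{struceqcplxform}. One must verify that the unit-norm frame $\omega_2$ really is compatible with the holomorphic structure on $N^*$, carefully separate the $(1,\!0)$- and $(0,\!1)$-parts after pulling back by $\psi$, and track signs through the complex conjugations in~\eqref{struceqcplxform}. The appearance of precisely $K-\delta\beta$, with the $\star\d\beta$-contribution dropping out upon taking the real part and the $\delta\beta$-term dying under integration, is the delicate point on which the entire degree count turns.
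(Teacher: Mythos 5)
Your proposal is correct and follows essentially the same route as the paper: equip the co-normal bundle with the natural Hermitian metric coming from the Cartan connection, identify its Chern connection as $\ov{\psi}-\psi$, compute the curvature $2\i(K-\delta\beta)\,\d\mu$ in the Weyl gauge~\eqref{weylgauge}, and conclude by Gauss--Bonnet. The only difference is presentational: the paper first realises $N^*$ as an associated bundle via the representations $\chi$, $\rho$ and the quotient $\chi/\rho$ on the $H$-bundle $B'$ before reducing to $B''\simeq F^+_g$, whereas you pass directly to the $\mathrm{SO}(2)$-reduction and use $\omega_2$ as a unitary frame.
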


\begin{proof}
We will compute the degree of the co-normal bundle of $D=[g](\Sigma)\subset Z$ by computing its first Chern-class. We let $B^{\prime}\subset B$ denote the subbundle consisting of those elements $b \in B$ whose projection to $Z$ lies in $D$. Consequently, $B^{\prime} \to D$ is a principal right $H$-bundle.

The characterisation of the complex structure on $Z$ given in Proposition~\ref{charcplxstruccart} implies that the sections of the rank $2$ vector bundle  
$$
T^{1,0}Z^*\vert_D \to D
$$
correspond to functions $\lambda=(\lambda^i) : B^{\prime} \to \C^2$ such that
$$
\left(\omega_1\;\omega_2\right)\cdot \left(\begin{array}{c} \lambda^1 \\ \lambda^2\end{array}\right)=\lambda^1\omega_1+\lambda^2\omega_2
$$
is invariant under the $H$-right action. Using~\eqref{rightactionform1} and~\eqref{rightactionform2} we see that this condition on $\lambda$ is equivalent to the equivariance of $\lambda$ with respect to the right action of $H$ on $B^{\prime}$ and the right action of $H$ on $\C^2$ induced by the representation 
$$
\chi : H \to \mathrm{GL}(2,\C), \quad z\rtimes r \e^{\i\phi} \mapsto \left(\begin{array}{cc} \frac{1}{r^3}\e^{\i\phi}& \frac{z}{r}\e^{\i\phi}\\ 0 & \e^{2\i\phi} \end{array}\right).
$$
Similarly, we see that the $(1,\! 0)$-forms on $D$ are in one-to-one correspondence with the complex-valued functions on $B^{\prime}$ that are equivariant with respect to the right action of $H$ on $B^{\prime}$ and the right action of $H$ on $\C$ induced by the representation
$$
\rho : H \to \mathrm{GL}(1,\C), \quad z\rtimes r\e^{\i\phi} \mapsto \frac{1}{r^3}\e^{\i\phi}.
$$
The representation $\rho$ is a subrepresentation of $\chi$, hence the quotient representation $\chi/\rho$ is well defined and the sections of the co-normal bundle of $D$ are therefore in one-to-one correspondence with the com\-plex-valued functions $\nu$ on $B^{\prime}$ that satisfy the equivariance condition
$$
\nu(b \cdot z \rtimes r \e^{\i\phi})=(\chi/\rho)\left((z\rtimes r \e^{\i\phi})^{-1}\right)\nu(b)=\e^{-2\i\phi}\nu(b)
$$
for all $b \in B^{\prime}$ and $z \rtimes r\e^{\i\phi} \in H$. Here we have used that the quotient representation $\chi/\rho$ is isomorphic to the complex one-dimensional representation of $H$
$$
z\rtimes r\e^{\i\phi} \mapsto \e^{2\i\phi}. 
$$
In particular, given two such complex-valued functions $\nu_1,\nu_2$ on $B^{\prime}$, we may define 
$$
\langle \nu_1,\nu_2\rangle=\nu_1\ov{\nu_2},
$$
which equips the co-normal bundle $N^* \to D$ with a Hermitian bundle metric $h$. 

We will next compute the Chern connection of $h$ and express it in terms of the Cartan connection $\theta$. This can be done most easily by further reducing the bundle $B^{\prime}\subset B$. Since $D\subset Z$ is the image of a section of $Z \to \Sigma$ and is a holomorphic curve, it follows from the characterisation of the complex structure $J$ on $Z$ given in Proposition~\ref{charcplxstruccart} that there exists a complex-valued function $f$ on $B^{\prime}$ such that
$$
\omega_2=f\omega_1. 
$$
Using the formulae~\eqref{rightactionform1} and~\eqref{rightactionform2} again, it follows that the function $f$ satisfies
$$
f(b\cdot z \rtimes r\e^{\i\phi})=r^2\left(r\e^{\i\phi}f(b)+z\right)
$$
for all $b\in B^{\prime}$ and $z\rtimes r\e^{\i\phi} \in H$. Consequently, the condition $f\equiv 0$ defines a principal right $\mathrm{CO}(2)$-subbundle $B^{\prime\prime} \to D$ on which $\omega_2$ vanishes identically. The representation $\chi/\rho$ restricts to define a representation of the subgroup $\mathrm{CO}(2)\subset H$ and therefore, the sections of the co-normal bundle of $D$ are in one-to-one correspondence with the complex-valued functions $\nu$ on $B^{\prime\prime}$ satisfying the equivariance condition
\begin{equation}\label{rightsecconorm}
\nu(b \cdot r\e^{\i\phi})=\e^{-2\i\phi}\nu(b)
\end{equation}
for all $b \in B^{\prime\prime}$ and $r\e^{\i\phi}$ in $\mathrm{CO}(2)$. Equation~\eqref{rightsecconorm} implies that infinitesimally $\nu$ must satisfy
$$
\d \nu=\nu^{(1,0)}\omega_1+\nu^{(0,1)}\ov{\omega_1}+\nu\left(\psi-\ov{\psi}\right)
$$
for unique complex-valued functions $\nu^{(1,0)}$ and $\nu^{(0,1)}$ on $B^{\prime\prime}$. A simple computation shows that the form $\psi-\ov{\psi}$ is invariant under the $\mathrm{CO}(2)$ right action, therefore it follows that the map
$$
\nabla_{\mathfrak{p}} : \Gamma(D,N^*) \to \Omega^1(D,N^*), \quad  \nu \mapsto \d \nu-\nu\left(\psi-\ov{\psi}\right) 
$$
defines a connection on the co-normal bundle of $D\subset Z$. By construction, this connection preserves $h$. As a consequence of the characterisation of the complex structure on $Z$, it follows that a section $\nu$ of the co-normal bundle $N^* \to D$ is holomorphic if and only if $\nu^{0,1}=0$. This shows that $\nabla_{\mathfrak{p}}^{0,1}=\bar\partial_{N^*}$, that is, the connection $\nabla_{\mathfrak{p}}$ must be the Chern-connection of $h$. The Chern-connection of $h$ has curvature
$$
\d\left(\ov{\psi}-\psi\right)=\frac{1}{2}\left(\omega_1\wedge\ov{\xi}-\ov{\omega_1}\wedge\xi\right)
$$
where we have used the structure equations~\eqref{struceqcplxform} and that $\omega_2\equiv 0$ on $B^{\prime\prime}$. Since we have a section $[g] : \Sigma \to Z$ whose image is a holomorphic curve, we know from Theorem~\ref{mainweylmetri} that $\mathfrak{p}$ is defined by a conformal connection. Let $g$ be any metric defining $[g]$ and denote by $F^+_g \to \Sigma$ the $\mathrm{SO}(2)$-bundle of positively oriented $g$-orthonormal coframes. By the uniqueness part of Cartan's bundle construction we must have an $\mathrm{SO}(2)$-bundle embedding $\psi : F^+_g \to B^{\prime\prime}$ covering the identity on $\Sigma\simeq D$ so that $\psi^*\theta=\phi$ where $\phi=(\phi^i_j)_{i,j=0,1,2}$ is given in~\eqref{weylgauge}. Recall that $\omega_2$ vanishes identically on $B^{\prime\prime}$ which is consistent with~\eqref{weylgauge}, since
$$
\aligned
\psi^*\omega_2&=(\phi^1_1-\phi^2_2)+\i\left(\phi^1_2+\phi^2_1\right)\\
&=-\frac{1}{3}\beta-\left(-\frac{1}{3}\beta\right)+\i\left((\star \beta-\levcon)+(\levcon-\star \beta)\right)=0. \\
\endaligned
$$
Therefore, by using~\eqref{weylgauge}, we see that the curvature of $\nabla_{\mathfrak{p}}$ is given by
$$
\d\left(\ov{\psi}-\psi\right)=2\i \left(K-\delta\beta\right)\d\mu.
$$
where $\d \mu=\eta^1\wedge\eta^2$ denotes the area form of $g$. 
Concluding, we have shown that the first Chern-class $c_1(N^*) \in H^2(D,\mathbb{Z})$ of $N^*\to D$ is given by
$$
c_1(N^*)=\left[\frac{1}{\pi}(\delta\beta-K)\d\mu\right]=\left[-\frac{K}{\pi}\d\mu\right].
$$
Hence the degree of $N^*\to D$ is 
$$
\mathrm{deg}(N^*)=\int_{D}c_1(N^*)=-2\chi(\Sigma),
$$
by the Gauss-Bonnet theorem. It follows that the normal bundle $N \to D$ has degree $2\chi(\Sigma)$. 
\end{proof}

\subsection{Rigidity of holomorphic curves}

We are now ready to prove the following rigidity result. 
\begin{prp}\label{main}
Let $(\Sigma,\mathfrak{p})$ be a closed oriented projective surface satisfying $\chi(\Sigma)<0$. Then there exists at most one section $[g] : \Sigma \to Z$ whose image is a holomorphic curve.  
\end{prp}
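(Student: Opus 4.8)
The plan is to argue by contradiction through intersection theory on the complex surface $Z$. Suppose there were two distinct sections $[g_1],[g_2]:\Sigma\to Z$ with holomorphic image, and write $D_1=[g_1](\Sigma)$ and $D_2=[g_2](\Sigma)$. Each $D_i$ is a connected compact holomorphic curve embedded in $Z$ — a section of $Z\to\Sigma$ is automatically an injective immersion, hence an embedding since $\Sigma$ is closed — and if $[g_1]\neq[g_2]$, then $D_1\neq D_2$ as subsets, since each $D_i$ meets every fibre of $Z\to\Sigma$ in exactly one point. I would show that this leads to a contradiction with $\chi(\Sigma)<0$ by comparing the homological self-intersection of $D_1$ with the geometric intersection number $D_1\cdot D_2$.

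For the homological input I would exploit that the fibre of $Z\to\Sigma$ is the contractible disk $\mathrm{GL}^+(2,\R)/\mathrm{CO}(2)$. Since $Z\to\Sigma$ is a fibre bundle with contractible fibre, the projection $\pi$ is a homotopy equivalence, so $H_2(Z,\Z)\cong H_2(\Sigma,\Z)\cong\Z$. As $\pi\circ[g_i]=\mathrm{id}_\Sigma$, both classes $[D_1],[D_2]$ map to the fundamental class of $\Sigma$ and hence coincide, $[D_1]=[D_2]$ in $H_2(Z,\Z)$. Because $D_1$ is compact, the intersection pairing with $D_1$ is a homological invariant of its partner, so $D_1\cdot D_2=D_1\cdot D_1$; and the self-intersection of a compact complex curve equals the degree of its normal bundle, which by Lemma~\ref{keyprop} equals $2\chi(\Sigma)$. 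Thus $D_1\cdot D_2=2\chi(\Sigma)<0$.

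Finally I would invoke positivity of complex intersections: two distinct irreducible compact holomorphic curves in a complex surface share no common component, so they meet in finitely many points, each contributing a strictly positive local intersection multiplicity; hence $D_1\cdot D_2\geq 0$. This contradicts $D_1\cdot D_2=2\chi(\Sigma)<0$, forcing $D_1=D_2$ and therefore $[g_1]=[g_2]$.

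I expect the main technical obstacle to be making intersection theory rigorous on the \emph{non-compact} surface $Z$: one must justify, for the compact curves $D_1,D_2$, both that the homological self-intersection $D_1\cdot D_1$ computes $\deg N$ and that $D_1\cdot D_2$ depends only on $[D_2]\in H_2(Z,\Z)$. This can be handled by working with the Thom class of $D_1$ in $H^2(Z,Z\setminus D_1)$, or by passing to a tubular neighbourhood of $D_1$ where the computation reduces to the Euler number of its normal bundle; once this is in place, the negativity supplied by Lemma~\ref{keyprop} does all the work.
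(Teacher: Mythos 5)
Your argument is correct, and it reaches the contradiction by a route that is different from, though closely allied to, the paper's. The paper starts from the same two inputs you isolate --- the contractible fibre forces $H^2(Z,\Z)\simeq H^2(\Sigma,\Z)$ with any two sections inducing the same isomorphism, and $\deg N=2\chi(\Sigma)<0$ by Lemma~\ref{keyprop} --- but instead of intersection positivity it uses the divisor/line bundle correspondence: it takes the line bundle $L\to Z$ associated to the effective divisor $D_1$, with a holomorphic section $\sigma$ vanishing exactly on $D_1$, computes $\deg(L|_{D_2})=\deg(L|_{D_1})=\deg N(D_1)<0$ via the cohomology isomorphism and the adjunction formula $N(D_1)\simeq L|_{D_1}$, and concludes that $\sigma|_{D_2}$, being a holomorphic section of a negative-degree line bundle on a compact curve, vanishes identically, whence $D_2\subseteq D_1$ and so $D_1=D_2$. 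Numerically the two proofs compute the same quantity ($\deg(L|_{D_2})$ \emph{is} your $D_1\cdot D_2$), and the paper's final step is essentially positivity of intersections in sheaf-theoretic clothing, since a nonzero holomorphic section of a line bundle on a compact Riemann surface forces the degree to be non-negative. What the paper's formulation buys is that it sidesteps exactly the technical point you flag: one never has to set up topological intersection numbers or local intersection multiplicities on the non-compact four-manifold $Z$, because restricting $c_1(L)$ to compact curves raises no compactness issues. Your version, once the Thom-class or tubular-neighbourhood bookkeeping is carried out as you indicate, is equally rigorous and arguably more geometric in flavour.
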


\begin{proof}
Let $[g] : \Sigma \to Z$ be a section whose image $D=[g](\Sigma)$ is a holomorphic curve. Since $D$ is an effective divisor, the divisor/line bundle correspondence yields a holomorphic line bundle $L \to Z$ and a holomorphic section $\sigma: Z \to L$ so that $\sigma$ vanishes precisely on $D$. Recall that the fibre of $Z \to \Sigma$ is the open unit disk and hence contractible. It follows that the projection to $Z \to \Sigma$ induces an isomorphism 
$
\mathbb{Z}\simeq H^2(\Sigma,\mathbb{Z})\simeq H^2(Z,\mathbb{Z}). 
$
In particular, every smooth section of $Z \to \Sigma$ induces and isomorphism $H^2(Z,\mathbb{Z})\simeq H^2(\Sigma,\mathbb{Z})$ on the second integral cohomology groups and any two such isomorphisms agree. Keeping this in mind we now suppose that $[\hat{g}] : \Sigma \to Z$ is another section whose image is a holomorphic curve. Using the functoriality of the first Chern class we compute the degree of $L \to Z$ restricted to $D^{\prime}=[\hat{g}](\Sigma)$
$$
\mathrm{deg}\left(L\vert_{D^\prime}\right)=\int_{D^{\prime}}c_1(L)=\int_\Sigma[g]^*\left(c_1(L)\right)=\int_{D}c_1(L)=\mathrm{deg}\left(L\vert_{D}\right)
$$
where $c_1(L) \in H^2(Z,\Z)$ denotes the first Chern-class of the line bundle $L \to Z$. Using the first adjunction formula 
$$
N(D)\simeq L\vert_D
$$
and Lemma~\ref{keyprop} yields
$$
\mathrm{deg}\left(L\vert_{D^\prime}\right)=\mathrm{deg}\left(L\vert_{D}\right)=\mathrm{deg}\left(N(D)\right)<0. 
$$
Since $L\vert_{D^{\prime}}\to D^{\prime}$ has negative degree, it follows that its only holomorphic section is the zero section. Consequently, $\sigma$ vanishes identically on $D^{\prime}$. Since $\sigma$ vanishes precisely on $D$ we obtain the desired uniqueness $D=D^{\prime}$.   
\end{proof}
Combining Theorem~\ref{mainweylmetri} and Proposition~\ref{main} we get:
\begin{thm}\label{sameconformal}
Let $\Sigma$ be a closed oriented surface $\Sigma$ with $\chi(\Sigma)<0$. Then the map 
$$
\mathfrak{W}(\Sigma) \to \mathfrak{P}(\Sigma), \quad ([g],\nabla)\mapsto \mathfrak{p}(\nabla),
$$
which sends a Weyl structure to the projective equivalence class of its conformal connection, is injective.
\end{thm}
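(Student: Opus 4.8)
The plan is to deduce the statement from the two results already in hand, Theorem~\ref{mainweylmetri} and Proposition~\ref{main}, the only genuinely new input being a short application of Weyl's characterisation~\eqref{weylchar} to remove the remaining freedom in the connection. Suppose $([g_1],\nabla_1)$ and $([g_2],\nabla_2)$ are Weyl structures with $\mathfrak{p}(\nabla_1)=\mathfrak{p}(\nabla_2)=:\mathfrak{p}$, and let $Z$ denote the complex surface associated to $(\Sigma,\mathfrak{p})$. I would argue in two steps: first that the conformal structures coincide, and then that the connections coincide.

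For the first step I use that each $\nabla_i$ is, by definition of a Weyl structure, a conformal connection preserving $[g_i]$, and that it defines $\mathfrak{p}$. Theorem~\ref{mainweylmetri} then says precisely that the image of the section $[g_i]:\Sigma\to Z$ is a holomorphic curve. Since $\Sigma$ is closed and oriented with $\chi(\Sigma)<0$, Proposition~\ref{main} asserts that $Z\to\Sigma$ has at most one section with holomorphic image; hence the two sections agree, $[g_1]=[g_2]=:[g]$.

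For the second step I fix a Riemannian metric $g$ representing $[g]$ and write, using~\eqref{confconndown}, $\nabla_i={}^{(g,\beta_i)}\nabla={}^g\nabla+g\otimes\beta_i^\sharp-\iota(\beta_i)$ for unique $1$-forms $\beta_i$. Setting $\gamma=\beta_1-\beta_2$, the difference is $\nabla_1-\nabla_2=g\otimes\gamma^\sharp-\iota(\gamma)$. By~\eqref{weylchar}, the equality $\mathfrak{p}(\nabla_1)=\mathfrak{p}(\nabla_2)$ is equivalent to $(\nabla_1-\nabla_2)_0=0$. Since $\tr(g\otimes\gamma^\sharp)=\gamma$ and $\tr\iota(\gamma)=3\gamma$, the projection formula $\phi_0=\phi-\tfrac{1}{3}\iota(\tr\phi)$ gives $(\nabla_1-\nabla_2)_0=g\otimes\gamma^\sharp-\tfrac{1}{3}\iota(\gamma)$. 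Evaluating this $T\Sigma$-valued symmetric bilinear form on the pair $(\gamma^\sharp,\gamma^\sharp)$ produces a nonzero multiple of $\gamma^\sharp$ whenever $\gamma\neq 0$, so the trace-free part vanishes only for $\gamma=0$. Therefore $\beta_1=\beta_2$ and $\nabla_1=\nabla_2$, whence $([g_1],\nabla_1)=([g_2],\nabla_2)$.

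The conceptual weight of the proof lies entirely in the first step, which uses the full strength of the twistor correspondence of Theorem~\ref{mainweylmetri} together with the curve-rigidity of Proposition~\ref{main} (and thereby the degree computation of Lemma~\ref{keyprop} and the sign of $\chi(\Sigma)$). The second step is only linear algebra, but I expect it to be the subtle point of the write-up: projective equivalence constrains merely the trace-free part of the difference of two connections, so one must check that, within a fixed conformal class, the surviving pure-trace gauge freedom intersects the affine family of $[g]$-conformal connections in a single point. That transversality is exactly what the vanishing of $g\otimes\gamma^\sharp-\tfrac{1}{3}\iota(\gamma)$ records. No hard estimate is involved; the main obstacle is the bookkeeping needed to separate the determination of $[g]$ from that of $\nabla$.
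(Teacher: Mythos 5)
Your proposal is correct and follows essentially the same route as the paper: Theorem~\ref{mainweylmetri} plus Proposition~\ref{main} to pin down the conformal structure, then Weyl's criterion~\eqref{weylchar} applied to the difference of two $[g]$-conformal connections to pin down the connection. The only (cosmetic) difference is in the final linear algebra: the paper introduces the $1$-form $\alpha$ realising the projective change and contracts the resulting identity with $g^{\#}$, whereas you compute the trace-free part $g\otimes\gamma^{\sharp}-\tfrac{1}{3}\iota(\gamma)$ directly and evaluate it on $(\gamma^{\sharp},\gamma^{\sharp})$; both computations are correct and equivalent.
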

\begin{proof}
Let $([g],\nabla)$ and $([\hat{g}],\nabla^{\prime})$ be Weyl structures on $\Sigma$ having projectively equivalent conformal connections. Let $\mathfrak{p}$ be the projective structure defined by $\nabla$ (or $\nabla^{\prime})$. By Theorem~\ref{mainweylmetri} both $[g] : \Sigma \to Z$ and $[\hat{g}] : \Sigma \to Z$ have holomorphic image, with respect to the complex structure on $Z$ induced by $\mathfrak{p}$, and hence must agree by Proposition~\ref{main}. Since $\nabla$ and $\nabla^{\prime}$ are projectively equivalent, it follows that we may write
$$
\nabla+\iota(\alpha)=\nabla+\alpha\otimes \mathrm{Id}+\mathrm{Id}\otimes\alpha=\nabla^{\prime}
$$
for some $1$-form $\alpha$ on $\Sigma$. Since $\nabla$ and $\nabla^{\prime}$ are conformal connections for the same conformal structure $[g]$, there must exist $1$-forms $\beta$ and $\hat{\beta}$ on $\Sigma$ so that
$$
{}^g\nabla+g\otimes \beta^{\sharp}-\iota(\beta-\alpha)={}^g\nabla+g\otimes \hat{\beta}^{\sharp}-\iota(\hat{\beta}).
$$
Hence we have
$$
0=g\otimes \left(\beta^{\sharp}-\hat{\beta}^{\sharp}\right)+\iota(\alpha+\hat{\beta}-\beta).
$$
Writing $\gamma=\alpha+\hat{\beta}-\beta$ as well as $X=\beta^{\sharp}-\hat{\beta}^{\sharp}$ and taking the trace gives $3\,\gamma=\hat{\beta}-\beta=-X^{\flat}$. We thus have
$$
0=g\otimes X-\frac{1}{3}\,\iota(X^{\flat}).
$$
Contracting this last equation with the dual metric $g^\#$ implies $X=0$. It follows that $\alpha$ vanishes too and hence $\nabla=\nabla^{\prime}$ as claimed.
\end{proof}
Since exact Weyl structures correspond to Riemannian metrics up to constant rescaling, we immediately obtain~\cite{MR1796527}: 
\begin{cor}
A Riemannian metric $g$ on a closed oriented surface $\Sigma$ satisfying $\chi(\Sigma)<0$ is uniquely determined -- up to constant rescaling -- by its unparametrised geodesics.
\end{cor}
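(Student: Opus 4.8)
The plan is to deduce the corollary from Theorem~\ref{sameconformal} by means of the dictionary between Riemannian metrics and exact Weyl structures recorded in Section~\ref{confconsec}. The first observation is that the Levi-Civita connection ${}^g\nabla$ of a Riemannian metric $g$ preserves $g$, hence \emph{a fortiori} the conformal class $[g]$, so that $([g],{}^g\nabla)$ is an exact Weyl structure and in particular an element of $\mathfrak{W}(\Sigma)$. Since the geodesics of ${}^g\nabla$ are exactly the geodesics of $g$, the unparametrised geodesics of $g$ are, by definition of projective equivalence, nothing but the projective structure $\mathfrak{p}({}^g\nabla)$ attached to this Weyl structure under the map of Theorem~\ref{sameconformal}.

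Now let $g$ and $\hat g$ be two Riemannian metrics on $\Sigma$ sharing the same unparametrised geodesics. By the previous remark this is precisely the statement $\mathfrak{p}({}^g\nabla)=\mathfrak{p}({}^{\hat g}\nabla)$, i.e.\ the two exact Weyl structures $([g],{}^g\nabla)$ and $([\hat g],{}^{\hat g}\nabla)$ have the same image under the map $\mathfrak{W}(\Sigma)\to\mathfrak{P}(\Sigma)$. As $\chi(\Sigma)<0$, Theorem~\ref{sameconformal} guarantees that this map is injective, whence the two Weyl structures coincide; in particular $[g]=[\hat g]$ and ${}^g\nabla={}^{\hat g}\nabla$.

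It remains to convert the equality of Levi-Civita connections into the asserted statement about the metrics. From $[g]=[\hat g]$ I may write $\hat g=\exp(2u)\,g$ for a smooth function $u$ on $\Sigma$, and the equality ${}^{\hat g}\nabla={}^g\nabla$ then forces, via~\eqref{levicivitaconfresc}, the relation $g\otimes{}^g\nabla u=\iota(\d u)$; taking the trace of this identity in $\Gamma(V)$ yields $\d u=3\,\d u$, so $\d u=0$ and $u$ is constant because $\Sigma$ is connected. Thus $\hat g$ is a constant multiple of $g$, as desired. The only genuinely substantive ingredient here is Theorem~\ref{sameconformal}, which ultimately rests on the rigidity of holomorphic curves in $Z$ provided by Proposition~\ref{main}; the remainder of the argument is a formal unwinding of definitions, and the single point that merits attention is to confirm that ``having the same unparametrised geodesics'' translates \emph{exactly} into the equality $\mathfrak{p}({}^g\nabla)=\mathfrak{p}({}^{\hat g}\nabla)$ and that the correspondence from connections back to metrics loses no more and no less than the constant rescaling.
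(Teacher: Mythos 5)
Your proposal is correct and follows the same route as the paper: the paper derives the corollary in one line from Theorem~\ref{sameconformal} together with the observation (already made in \S\ref{confconsec} via~\eqref{levicivitaconfresc}) that exact Weyl structures correspond to Riemannian metrics up to constant rescaling. You merely spell out the details the paper leaves implicit, and your trace computation $\d u = 3\,\d u$ correctly recovers the constant-rescaling ambiguity.
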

\begin{rmk}
The first (non-compact) examples of non-trivial pairs of projectively equivalent Riemannian metrics, that is, metrics sharing the same unparametrised geodesics, go back to Beltrami~\cite{beltrami}.
\end{rmk}
\begin{rmk}\label{extor}
Clearly, pairs of distinct flat tori (after pulling back the metrics to $T^2=S^1\times S^1$) yield pairs of Riemannian metrics on the $2$-torus that are (generically) not constant rescalings of each other, but have the same Levi-Civita connection. This fact together with Example~\ref{flatex} shows that the assumption $\chi(\Sigma)<0$ in Theorem~\ref{sameconformal} is optimal.  
\end{rmk}

\subsection{Conformal connections on the 2-sphere}

As an immediate by-product of the proof of Theorem~\ref{sameconformal} we see that a conformal connection on a closed oriented surface $\Sigma$ with $\chi(\Sigma)<0$ preserves precisely one conformal structure. By Remark~\ref{extor}, this is false in general on the $2$-torus. It is therefore natural to ask if a conformal connection on the $2$-sphere can preserve more than one conformal structure. We will show next that this is not the case. Let therefore $(g,\beta)$ and $(h,\alpha)$ on $S^2$ be such that the associated conformal connections agree
$$
{}^{(g,\beta)}\nabla={}^{(h,\alpha)}\nabla=\nabla.
$$
Fix an orientation on $S^2$ and let $\lambda : F^+_g \to S^2$ denote the $\mathrm{SO}(2)$-bundle of positively oriented $g$-orthonormal coframes with  co\-fram\-ing $(\eta^1,\eta^2,\levcon)$ as described in~\S\ref{confconsec}. Write $\lambda^*h=h_{ij}\eta^i\otimes\eta^j$ for unique real-valued functions $h_{ij}=h_{ji}$ on $F^+_g$ and $\lambda^*\beta=b_i\eta^i$ as well as $\lambda^*\alpha=a_i\eta^i$ for unique real-valued functions $a_i,b_i$ on $F^+_g$. Recall from ~\S\ref{confconsec} that on $F^+_g$ the connection $1$-form $\zeta=(\zeta^i_j)$ of $\nabla$ takes the form
\begin{equation}\label{kappaagain}
\zeta=\begin{pmatrix} -b_1\eta^1-b_2\eta^2 & b_1\eta^2-b_2\eta^1-\varphi\\ -b_1\eta^2+b_2\eta^1+\varphi & -b_1\eta^1-b_2\eta^2\end{pmatrix}.
\end{equation}
By assumption, we have
$$
\nabla h=2\alpha\otimes h.  
$$
On $F^+_g$ this condition translates to
$$
\d h_{ij}=h_{k j}\zeta^{k}_i+h_{ik}\zeta^{k}_j+2a_{k}h_{ij}\eta^{k}. 
$$
Hence using \eqref{kappaagain} we obtain
$$
\aligned
\d (h_{11}-h_{22})&=2h_{11}\zeta^1_1-2h_{12}\zeta^1_2+2h_{21}\zeta^2_1-2h_{22}\zeta^2_2+2a_{k}\eta^k(h_{11}-h_{22})\\
&=2\left[(a_k-b_k)(h_{11}-h_{22})\eta^k+2h_{12}\zeta^2_1\right],\\
\d h_{12}&=h_{12}\zeta^1_1+h_{22}\zeta^2_1+h_{11}\zeta^1_2+h_{12}\zeta^2_2+2h_{12}a_k\eta^k\\
&=2(a_k-b_k)h_{12}\eta^k-(h_{11}-h_{22})\zeta^2_1.
\endaligned
$$
Writing 
$$
f=(h_{11}-h_{22})^2+4(h_{12})^2,
$$
we get
\begin{equation}\label{derivconfdif}
\aligned
\d f=&\,4(h_{11}-h_{22})\left[(a_k-b_k)(h_{11}-h_{22})\eta^k+2h_{12}\zeta^2_1\right]+8h_{12}\cdot\\
&\cdot\left[2(a_k-b_k)h_{12}\eta^k-(h_{11}-h_{22})\zeta^2_1\right]\\
=&\,4(h_{11}-h_{22})^2(a_k-b_k)\eta^k+16(h_{12})^2(a_k-b_k)\eta^k\\
=&\,4f(a_k-b_k)\eta^k. 
\endaligned
\end{equation}
In particular, the function $f$ on $F^+_g$ is constant along the $\lambda$-fibres and hence the pullback of a unique function on $S^2$ which we will also denote by $f$. The Ricci curvature of $\nabla$ is 
$$
\mathrm{Ric}(\nabla)=(K_g-\delta_g \beta)g-2\d \beta=(K_{h}-\delta_{h}\alpha)h-2\d\alpha. 
$$
It follows that the metrics $g$ and $h$ are conformal on the non-empty open subset $\Sigma^{\prime}\subset S^2$ where the symmetric part of the Ricci curvature is positive definite. Since $\d \alpha=\d \beta$ and $H^1(S^2)=0$, we must have that $\alpha-\beta=\d u$ for some real-valued function $u$ on $S^2$. Consequently, it follows from~\eqref{confresc} that after possibly conformally rescaling $h$ we can assume $\alpha=\beta$ (and hence $a_i=b_i$) without loss of generality. Therefore, \eqref{derivconfdif} implies that $f$ is constant. By construction, the function $f$ vanishes precisely at the points where $h$ is conformal to $g$. Since we already know that $f$ vanishes on the open subset $\Sigma^{\prime}$ it must vanish on all of $S^2$. We have thus proved:
\begin{prp}\label{ones2}
A conformal connection on the $2$-sphere preserves precisely one conformal structure. 
\end{prp}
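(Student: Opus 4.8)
The plan is to prove the sharper statement that $\nabla$ preserves \emph{at most} one conformal structure; that it preserves at least one is immediate, since being a conformal connection means preserving some $[g]$. So I would fix an orientation, assume $\nabla={}^{(g,\beta)}\nabla={}^{(h,\alpha)}\nabla$ arises from two pairs, and work on the bundle $\lambda:F^+_g\to S^2$ of positively oriented $g$-orthonormal coframes with its coframing $(\eta^1,\eta^2,\levcon)$ from \S\ref{confconsec}. Writing $\lambda^*h=h_{ij}\eta^i\otimes\eta^j$, $\lambda^*\beta=b_i\eta^i$ and $\lambda^*\alpha=a_i\eta^i$, the two conformal classes agree at a point precisely when the $g$-tracefree part of $h$ vanishes there, so I would measure the conformal discrepancy by the non-negative function
$$
f=(h_{11}-h_{22})^2+4(h_{12})^2,
$$
whose zero locus is exactly the set where $g$ and $h$ are pointwise conformal.

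The first computation is to differentiate $f$. Feeding $\nabla h=2\alpha\otimes h$ through the explicit connection $1$-form $\zeta$ on $F^+_g$ recorded in \S\ref{confconsec} yields first-order identities for $h_{11}-h_{22}$ and $h_{12}$, and on assembling $f$ the off-diagonal contributions proportional to $\zeta^2_1$ cancel exactly, leaving the clean identity
$$
\d f=4f\,(a_k-b_k)\eta^k.
$$
This proportionality is the structural core of the proof. Because the right-hand side has no $\levcon$-component, $f$ is constant along the $\lambda$-fibres and descends to a function on $S^2$; and once the $1$-form $(a_k-b_k)\eta^k$ is trivialised, $f$ is forced to be globally constant.

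Next I would trivialise $\alpha-\beta$. Computing the Ricci tensor of $\nabla$ in both gauges gives
$$
\mathrm{Ric}(\nabla)=(K_g-\delta_g\beta)\,g-2\,\d\beta=(K_h-\delta_h\alpha)\,h-2\,\d\alpha,
$$
and comparing antisymmetric parts forces $\d\beta=\d\alpha$. As $H^1(S^2)=0$, this yields $\alpha-\beta=\d u$ for some function $u$, and the rescaling invariance~\eqref{confresc} lets me replace $h$ by a conformally equivalent metric for which $\alpha=\beta$, i.e. $a_i=b_i$. Then $\d f=0$ and $f$ is constant.

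It remains to see that the constant is zero, and this is exactly where the positive Euler characteristic does its work. Comparing the symmetric parts above gives $(K_g-\delta_g\beta)\,g=(K_h-\delta_h\alpha)\,h$, so on the open set where the scalar $K_g-\delta_g\beta$ is positive the two metrics are proportional and $f$ vanishes. That this set is nonempty follows from Gauss--Bonnet: since the codifferential integrates to zero on a closed surface,
$$
\int_{S^2}(K_g-\delta_g\beta)\,\d\mu_g=\int_{S^2}K_g\,\d\mu_g=2\pi\chi(S^2)=4\pi>0,
$$
so $K_g-\delta_g\beta>0$ somewhere. A constant function vanishing on a nonempty open set vanishes identically, whence $f\equiv0$ and $[g]=[h]$. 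I expect the only delicate step to be deriving $\d f=4f(a_k-b_k)\eta^k$, which hinges on the exact cancellation of the $\zeta^2_1$ contributions; everything downstream is the comparison of the two Ricci gauges and the Gauss--Bonnet positivity that, for $\chi>0$, plays the role of the twistor rigidity used in the $\chi<0$ case.
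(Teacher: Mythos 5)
Your proposal is correct and follows essentially the same route as the paper: the same discrepancy function $f=(h_{11}-h_{22})^2+4(h_{12})^2$, the same identity $\d f=4f(a_k-b_k)\eta^k$, the same comparison of the two Ricci gauges, and the same reduction to $\alpha=\beta$ via $H^1(S^2)=0$ and conformal rescaling. The only addition is your explicit Gauss--Bonnet argument for why the set where $K_g-\delta_g\beta>0$ is non-empty, which the paper leaves implicit but is exactly the right justification.
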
 

Combining Lemma~\ref{keyprop}, Theorem~\ref{mainweylmetri} and Proposition~\ref{ones2} with Kodaira's deformation theorem~\cite{MR0133841}, we obtain the following result about the deformation space of a conformal connection on the $2$-sphere $S^2$. 
\begin{thm}\label{deform}
Every conformal connection on the $2$-sphere lies in a complex $5$-manifold of conformal connections, all of which share the same unparametrised geodesics. 
\end{thm}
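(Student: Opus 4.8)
The plan is to produce the $5$-manifold directly as the Kodaira deformation space of the holomorphic curve attached to the given connection. Starting from a conformal connection $\nabla$ on $S^2$, I would first pass to the projective structure $\mathfrak{p}=\mathfrak{p}(\nabla)$ and its associated complex surface $Z$, as in Proposition~\ref{charcplxstruccart}. Since $\nabla$ preserves a conformal structure $[g]$, Theorem~\ref{mainweylmetri} guarantees that $D=[g](S^2)\subset Z$ is a holomorphic curve, and because $[g]$ is a section of $Z\to S^2$ we have $D\simeq S^2\simeq\mathbb{CP}^1$. Lemma~\ref{keyprop} then gives $\deg N=2\chi(S^2)=4$ for the normal bundle $N\to D$; since a holomorphic line bundle on $\mathbb{CP}^1$ is determined by its degree, $N\simeq\mathcal{O}(4)$.

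Next I would feed this into Kodaira's deformation theorem. The relevant cohomology is that of $\mathcal{O}(4)$ on $\mathbb{CP}^1$: one has $H^1(\mathbb{CP}^1,\mathcal{O}(4))=0$, so the deformations of $D$ inside $Z$ are unobstructed, while $\dim_{\mathbb{C}}H^0(\mathbb{CP}^1,\mathcal{O}(4))=5$. Kodaira's theorem therefore furnishes a locally complete family of holomorphic curves in $Z$ near $D$, parametrised by a complex manifold $M$ with $\dim_{\mathbb{C}}M=5$, each member being a small deformation of $D$.

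It then remains to translate these deformed curves back into conformal connections. Because $D$ is the image of a section, the projection $Z\to S^2$ restricts to a diffeomorphism on $D$; by openness this persists for every curve $D_t$ sufficiently close to $D$, so each $D_t$ is again the image of a section $[g_t]\colon S^2\to Z$ with holomorphic image. By Theorem~\ref{mainweylmetri} each $[g_t]$ is preserved by a conformal connection $\nabla_t$ defining the same projective structure $\mathfrak{p}$, and the purely local computation at the end of the proof of Theorem~\ref{sameconformal} (two projectively equivalent conformal connections preserving one and the same conformal structure must coincide) makes $\nabla_t$ well defined. Proposition~\ref{ones2}, which asserts that $\nabla_t$ preserves no conformal structure other than $[g_t]$, then ensures that distinct $D_t$ produce distinct $\nabla_t$. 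This yields an injection of $M$ into the space of conformal connections defining $\mathfrak{p}$, transporting the complex structure of $M$ and exhibiting the image as a complex $5$-manifold; since all of these connections define $\mathfrak{p}$, they share the unparametrised geodesics of $\nabla$.

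The step I expect to demand the most care is the last one: checking that the Kodaira deformations remain graphical over $S^2$, so that they genuinely arise as images of sections rather than as arbitrary holomorphic curves in $Z$, and that the resulting map $M\to\{\text{conformal connections}\}$ is a complex-analytic embedding and not merely a set-theoretic bijection, so that the $5$-manifold structure is faithfully carried over to the family of connections.
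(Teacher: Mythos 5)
Your proposal is correct and follows essentially the same route as the paper: identify $[g](S^2)\subset Z$ as a rational curve with normal bundle $\mathcal{O}(4)$ via Lemma~\ref{keyprop}, apply Kodaira's theorem to get a locally complete complex $5$-dimensional family of nearby curves, observe that nearby curves are still sections and hence give conformal connections defining $\mathfrak{p}$, and use Proposition~\ref{ones2} to pass from curves to connections. Your extra care about well-definedness of $\nabla_t$ and the graphical persistence of nearby deformations only makes explicit what the paper leaves implicit.
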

\begin{rmk}
Recall that Kodaira's theorem states that if $Y\subset Z$ is an embedded compact complex submanifold of some complex manifold $Z$ and satisfies $H^1(Y,\mathcal{O}(N))=0$, then $Y$ belongs to a locally complete family $\left\{Y_x \,|\, x\in X\right\}$ of compact complex submanifolds of $Z$, where $X$ is a complex manifold. Furthermore, there is a canonical isomorphism $T_xX\simeq H^0(Y_x,\mathcal{O}(N))$.   
\end{rmk}
\begin{proof}[Proof of Theorem~\ref{deform}]
Let $\nabla$ be a conformal connection on the oriented $2$-sphere defining the projective structure $\mathfrak{p}$. Let $[g] : S^2 \to Z$ be the conformal structure that is preserved by $\nabla$, then Theorem~\ref{mainweylmetri} implies that $Y=[g](S^2)\subset Z$ is a holomorphic curve biholomorphic to $\mathbb{CP}^1$. By Lemma~\ref{main} the normal bundle $N$ of $Y\subset Z$ has degree $4$ and hence we have (by standard results)
$$
\dim H^1(\mathbb{CP}^1,\mathcal{O}(4))=0, \quad \text{and}\quad \dim H^0(\mathbb{CP}^1,\mathcal{O}(4))=5.
$$
Consequently, Kodaira's theorem applies and $Y$ belongs to a locally complete family $\left\{Y_x \,|\, x\in X\right\}$ of holomorphic curves of $Z$, where $X$ is a complex $5$-manifold. A holomorphic curve in the family $X$ that is sufficiently close to $Y$ will again be the image of a section of $Z \to S^2$ and hence yields a conformal structure $[g^{\prime}]$ on $S^2$ that is preserved by a conformal connection $\nabla^{\prime}$ defining $\mathfrak{p}$. Since by Proposition~\ref{ones2} a conformal connection on $S^2$ preserves precisely one conformal structure, the claim follows. \end{proof}

\begin{rmk} 
In~\cite[Corollary 2]{MR3144212} it was shown that the conformal connections on $S^2$ whose (unparametrised) geodesics are the great circles are in one-to-one correspondence with the smooth quadrics in $\mathbb{CP}^2$ without real points. The space of smooth quadrics in $\mathbb{CP}^2$ is the complex $5$-dimensional space $\rm PSL(3,\C)/\rm PSL (2,\C)$, with the smooth quadrics without real points being an open submanifold thereof. Thus, the space of smooth quadrics without real points is complex five-dimensional, which is in agreement with Theorem~\ref{deform}.
\end{rmk}

\begin{rmk}
Inspired by the work of Hitchin~\cite{MR699802} (treating the case $n=2$) and Bryant~\cite{MR1141197} (treating the case $n=3$), it was shown in~\cite{MR2240424} that the deformation space $\mathcal{M}^{n+1}$ of a holomorphically embedded rational curve with self-intersection number $n\geq 2$ in a complex surface $Z$ comes canonically equipped with a holomorphic $\mathrm{GL}(2)$-structure, which is a (holomorphically varying) identification of every holomorphic tangent space of $\mathcal{M}$ with the space of homogeneous polynomials of degree $n$ in two complex variables. Therefore, every conformal connection on the $2$-sphere gives rise to a complex $5$-manifold $\mathcal{M}$ carrying a holomorphic $\mathrm{GL}(2)$-structure. 
\end{rmk}

\begin{rmk}
It is an interesting problem to classify the pairs of Weyl structures on the $2$-torus having projectively equivalent conformal connections. The Riemannian case was treated in~\cite{MR2053913}. 
\end{rmk}

\providecommand{\bysame}{\leavevmode\hbox to3em{\hrulefill}\thinspace}
\providecommand{\noopsort}[1]{}
\providecommand{\mr}[1]{\href{http://www.ams.org/mathscinet-getitem?mr=#1}{MR~#1}}
\providecommand{\zbl}[1]{\href{http://www.zentralblatt-math.org/zmath/en/search/?q=an:#1}{Zbl~#1}}
\providecommand{\jfm}[1]{\href{http://www.emis.de/cgi-bin/JFM-item?#1}{JFM~#1}}
\providecommand{\arxiv}[1]{\href{http://www.arxiv.org/abs/#1}{arXiv~#1}}
\providecommand{\doi}[1]{\href{http://dx.doi.org/#1}{doi$>\!$}}
\providecommand{\MR}{\relax\ifhmode\unskip\space\fi MR }
\providecommand{\MRhref}[2]{%
  \href{http://www.ams.org/mathscinet-getitem?mr=#1}{#2}
}
\providecommand{\href}[2]{#2}

\flushleft
\sc
\footnotesize
\bigskip
Department of Mathematics,
ETH Z\"urich,
8092 Z\"urich, Switzerland, and,\\
Department of Mathematics,
University of Fribourg,
1700 Fribourg, Switzerland\\
\tt
\href{mailto:mettler@math.ch}{mettler@math.ch}

\end{document}